\documentclass[a4paper, 11pt]{amsart} 
\usepackage{geometry}
\usepackage{fullpage}

\usepackage{amssymb,amsmath,amsthm,ascmac,array,bm}
\usepackage{graphicx}
\usepackage{color}
\usepackage{enumerate}
\usepackage{mathrsfs} 
\usepackage{url}

\usepackage{tikz}
\usetikzlibrary{intersections,calc,arrows.meta}
\usetikzlibrary{quotes}
\usetikzlibrary{knots}

\usepackage{comment}




\usepackage{hyperref}

\usepackage{xcolor}
\usepackage[capitalize,nameinlink,noabbrev,nosort]{cleveref}


\newtheorem{theoremcounter}{Theorem Counter}[section]

\theoremstyle{definition}
\newtheorem{defn}[theoremcounter]{Definition}
\newtheorem{rem}[theoremcounter]{Remark}

\theoremstyle{plain}
\newtheorem{lem}[theoremcounter]{Lemma}
\newtheorem{prop}[theoremcounter]{Proposition}

\newtheorem{thm}[theoremcounter]{Theorem}

\numberwithin{equation}{section}

\allowdisplaybreaks 



\makeatletter
\@namedef{subjclassname@2020}{
\textup{2020} Mathematics Subject Classification}
\makeatother 

\subjclass[2020]{Primary 57M12, 14G12; 
Secondary 57M10, 11R37} 


\keywords{knot, link, 3-manifold, id\`ele, Hasse norm principle, arithmetic topology}  

\begin{document}





   \title{On Hasse norm principle for $3$-manifolds\\ in arithmetic topology}
   \author{Hirotaka TASHIRO}

\maketitle


\begin{abstract}

Following the analogies between knots and primes, 3-manifolds and number rings in arithmetic topology, we show a topological analogue of the Hasse norm principle for finite cyclic coverings of 3-manifolds, which was originally stated for finite cyclic extensions of number fields.

\end{abstract}





\section*{Introduction}

{\em Arithmetic topology} investigates the interaction between 3-dimensional topology and number theory, based on the analogies between knots and primes, 3-manifolds and number rings. We refer to \cite{Morishita2012} as a basic reference. In this paper, we present a new result in arithmetic topology, by showing a topological analogue of the Hasse norm principle for finite cyclic covers of 3-manifolds, which was initially established for finite cyclic extensions of number fields in \cite{Hasse1931Beweis}.

First, we marshal basic analogies in arithmetic topology, which will be used in this paper.

{\small 
\begin{center}

\begin{tabular}{|c|c|}

\hline

oriented connected closed 3-manifold & ring of integers of a number field $F$ \\

  $M$ & ${\rm Spec}(\mathcal{O}_{F})$\\

\hline

knot in $M$ & maximal ideal of $\mathcal{O}_F$ \\

$K:S^{1}\hookrightarrow S^{3}$ &${\rm Spec}(\mathcal{O}_F/{\mathfrak{p}})\hookrightarrow {\rm Spec}(\mathcal{O}_{F})$\\

\hline

link in $M$ & finite set of maximal ideals of $\mathcal{O}_F$ \\

$L=K_{1}\cup \cdots \cup K_{n}$ &$S=\{\mathfrak{p}_{1}, \cdots \mathfrak{p}_{n}\}$\\

\hline

tubular neighborhood of $K$  & $\mathfrak{p}$-adic integers \\

$V_{K}$ &${\rm Spec}(\mathcal{O}_{\mathfrak{p}})$\\

\hline

boundary torus of $V_{K}$  & $\mathfrak{p}$-adic field  \\

$\partial V_{K}$ &${\rm Spec}(F_{\mathfrak{p}})$\\

\hline

group of $2$-chains of $M$& multiplicative group of $F$ \\

$C_2(M; \mathbb{Z})$ & $F^{\times}$\\

\hline

group of $1$-cycles of $M$ & group of fractional ideals of ${\mathcal O}_{F}$ \\

$Z_1(M)$ & $J_F$ \\

\hline

boundary map & boundary map \\

$\partial _{M}:C_{2}(M) \rightarrow  Z_{1}(M)$ & $\partial _{F}:F^{\times} \rightarrow J_{F}$\\

$\Sigma \mapsto \partial _M \Sigma$ & $a \mapsto (a)$\\

\hline

1st homology group of $M$ & ideal class group of $F$\\

$H_{1}(M) = {\rm Coker}(\partial _{M})$ & $H_{F} = {\rm Coker}(\partial _{F})$\\

\hline

2nd homology group of $M$ & unit group of $\mathcal{O}_F$\\

$H_{2}(M)$ & $\mathcal{O}_F^{\times}$\\

\hline

finite (branched) covering & finite extension \\

$h:N\rightarrow M$ &$E/F$\\

\hline

\end{tabular}

\end{center}
}
We also have the following analogy between the Hurewicz isomorphism and the Artin reciprocity in unramified class field theory.

\begin{center}

\begin{tabular}{|c|c|}

\hline

$H_{1}(M)\cong {\rm Gal}(M^{\rm ab}/M)$& $H_{F}\cong {\rm Gal}(F_{\rm ur}^{\rm ab}/F)$\\

\hline

\end{tabular}
\end{center}
Here $M^{\rm ab}$ (resp.~$F_{\rm ur}^{\rm ab}$) denotes the maximal abelian covering of $M$ (resp.~the maximal abelian unramified extension of $F$).

Let us recall the Hasse norm principle in number theory. For a number field $F$, let $I_F$ and $P_F$ denote the id\`ele group and the principal id\`ele group, respectively. Note that $P_F$ is defined as the image of the diagonal map $\Delta :F^\times \hookrightarrow I_F$. For a finite extension $E/F$, let $N_{E/F}:I_E \to I_F$ denote the norm map. We have the Hasse norm principle as follows. 

\begin{thm}[Hasse \cite{Hasse1931Beweis}] \label{thm.Hasse} 
Let $E/F$ be a finite cyclic extension of number fields. Then, 
\[
P_{F}\cap {\rm N}_{E/F}(I_{E})= {\rm N}_{E/F}(P_{E}).
\]
\end{thm}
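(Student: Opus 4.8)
The plan is to recast the quotient $(P_F\cap N_{E/F}(I_E))/N_{E/F}(P_E)$ as the kernel of a natural map in Tate cohomology and to exploit the $2$-periodicity of cohomology available for the cyclic group $G=\mathrm{Gal}(E/F)$. First I would identify $P_F\cong F^\times$ and $P_E\cong E^\times$ via the diagonal maps, and use $(E^\times)^G=F^\times$ to write
\[
\widehat{H}^0(G,E^\times)=F^\times/N_{E/F}(E^\times),
\]
while the id\`ele-theoretic analogue reads $\widehat{H}^0(G,I_E)=I_F/N_{E/F}(I_E)$. The latter rests on the local decomposition of id\`ele cohomology: writing $\prod_{w\mid v}E_w^\times\cong\mathrm{Ind}_{G_v}^G E_w^\times$ and applying Shapiro's lemma gives $\widehat{H}^0(G,I_E)\cong\bigoplus_v F_v^\times/N_{E_w/F_v}(E_w^\times)$, the sum being over places $v$ of $F$. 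Under these identifications the inclusion-induced map $\widehat{H}^0(G,E^\times)\to\widehat{H}^0(G,I_E)$ has kernel precisely $(F^\times\cap N_{E/F}(I_E))/N_{E/F}(E^\times)$, which is exactly the group I must show is trivial.

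Next I would apply the long exact sequence of Tate cohomology to the short exact sequence of $G$-modules
\[
1\to E^\times\to I_E\to C_E\to 1,
\]
where $C_E=I_E/E^\times$ is the id\`ele class group. The relevant segment
\[
\widehat{H}^{-1}(G,C_E)\to\widehat{H}^0(G,E^\times)\to\widehat{H}^0(G,I_E)
\]
shows that the kernel in question is the image of $\widehat{H}^{-1}(G,C_E)$. It therefore suffices to prove $\widehat{H}^{-1}(G,C_E)=0$.

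Here the cyclicity of $G$ is essential: for a finite cyclic group Tate cohomology is $2$-periodic, so $\widehat{H}^{-1}(G,C_E)\cong\widehat{H}^1(G,C_E)$. The vanishing $H^1(G,C_E)=0$ is a fundamental theorem of global class field theory, and combined with periodicity it forces $\widehat{H}^{-1}(G,C_E)=0$, completing the argument. (Equivalently, via $\widehat{H}^0\cong\widehat{H}^2$ one could instead identify the obstruction with the kernel of the relative Brauer-group map $\mathrm{Br}(E/F)\to\bigoplus_v\mathrm{Br}(E_w/F_v)$ and invoke its injectivity from the Albert--Brauer--Hasse--Noether sequence.)

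I expect the main obstacle to be twofold. Conceptually, everything hinges on the $2$-periodicity of cyclic Tate cohomology, which is precisely where the cyclic hypothesis enters and without which the statement genuinely fails (for instance, for biquadratic extensions). Technically, the deep input is the class field theory vanishing $H^1(G,C_E)=0$; beyond that, the work is the careful but routine bookkeeping needed to check that the diagonal and norm maps are compatible with the cohomological identifications above, in particular that the restricted-product structure of $I_E$ yields the direct-sum description of $\widehat{H}^0(G,I_E)$.
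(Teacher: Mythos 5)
Your argument is correct, but note that the paper does not prove this statement at all: Theorem 0.1 is quoted as classical background, with the proof deferred to Hasse's original article, so there is no in-paper proof to compare against. What you have written is the standard modern cohomological proof: identify $(P_F\cap N_{E/F}(I_E))/N_{E/F}(P_E)$ with the kernel of $\widehat{H}^0(G,E^\times)\to\widehat{H}^0(G,I_E)$, realize that kernel as the image of $\widehat{H}^{-1}(G,C_E)$ via the long exact sequence for $1\to E^\times\to I_E\to C_E\to 1$, and kill it using the $2$-periodicity of Tate cohomology for cyclic $G$ together with $H^1(G,C_E)=0$. All the identifications you invoke (Shapiro's lemma for the restricted product, the direct-sum form of $\widehat{H}^0(G,I_E)$ coming from the vanishing of $\widehat{H}^0$ on unramified unit groups) are standard and correctly deployed, and you rightly flag that cyclicity enters only through periodicity, which is why the principle fails for biquadratic extensions. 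It is worth observing that the paper's actual contribution, the topological analogue in Theorem 3.1, is proved by a strategy that loosely mirrors yours: Lemma 3.2 (the exact sequence $0\to(\tau-1)H_1(Y)\to H_1(Y)\to f_*(H_1(Y))\to 0$) plays the role of the $\widehat{H}^{-1}$ computation for a cyclic group, and Proposition 1.9 substitutes for the id\`ele-class-group identification; but the final step there is an explicit Seifert-surface and linking-number computation rather than an appeal to a vanishing theorem.
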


Now, let us turn to a topological analogue for 3-manifolds. Let $M$ be an oriented, connected, closed $3$-manifold. We fix a very admissible link $\mathcal{L}$ in $M$, which is a link consisting of countably many (finite or infinite) tame components with certain conditions and plays an analogous role to the set of primes, and we shall employ the notions of the id\`{e}le group $I_{M,\mathcal{L}}$ and the principal id\`{e}le group $P_{M,\mathcal{L}}$ introduced in \cite{NiiboUeki}. Note that $P_{M,\mathcal{L}}$ is defined as the image of the diagonal map $\Delta _{M,\mathcal{L}}:H_2 (M,\mathcal{L})\to I_{M,\mathcal{L}}$. For a finite cover $f: N\rightarrow M$ branched over a finite sublink $L$ of $\mathcal{L}$,  $f^{-1}(\mathcal{L})$ is again a very admissible link of $N$. Note that the induced map $f_* : I_{N,f^{-1}(\mathcal{L})} \rightarrow I_{M,\mathcal{L}}$ is an analogue of the norm map. Our main theorem, which may be regarded as a topological analogue of the Hasse norm principle,  is stated as follows.

\setcounter{section}{3}
\setcounter{theoremcounter}{0}
\begin{thm}
Let $M$ be an integral homology $3$-sphere endowed with a very admissible link $\mathcal{L}$. Let $f:N\to M$ be a finite cyclic covering branched over a finite sublink $L_0$ of $\mathcal{L}$. Then, 
\[
P_{M,\mathcal{L}}\cap f_{*}(I_{N, f^{-1}(\mathcal{L})}))=f_{*}(P_{N, f^{-1}(\mathcal{L})}).
\]
\end{thm}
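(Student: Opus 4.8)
The plan is to transcribe the classical cohomological proof of the Hasse norm theorem for cyclic extensions into the idelic language of Niibo--Ueki, using the deck transformation group $G=\mathrm{Gal}(N/M)\cong\mathbb{Z}/n\mathbb{Z}$ as the analogue of $\mathrm{Gal}(E/F)$ and Tate cohomology of $G$ as the main tool. First I would dispose of the easy inclusion $f_*(P_{N,f^{-1}(\mathcal{L})})\subseteq P_{M,\mathcal{L}}\cap f_*(I_{N,f^{-1}(\mathcal{L})})$, which follows from the $G$-equivariance of the diagonal map together with the compatibility $f_*\circ\Delta_{N,f^{-1}(\mathcal{L})}=\Delta_{M,\mathcal{L}}\circ f_*$, where on the right $f_*$ denotes the transfer on $H_2$. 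The whole content is then the reverse inclusion.

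To bring in cohomology, I would verify the structural identifications that let $G$-cohomology compute the objects at hand: the descent equalities $I_{M,\mathcal{L}}=(I_{N,f^{-1}(\mathcal{L})})^{G}$ and $H_2(M,\mathcal{L})=H_2(N,f^{-1}(\mathcal{L}))^{G}$, and the identification of the pushforward $f_*$ with the norm element $N_G=\sum_{\sigma\in G}\sigma$ on both the idele group and on $H_2$. Here the integral homology sphere hypothesis on $M$ should be what secures these identifications. Together with the injectivity of $\Delta$ coming from the idelic framework, this makes
\[
0\to H_2(N,f^{-1}(\mathcal{L}))\xrightarrow{\ \Delta\ } I_{N,f^{-1}(\mathcal{L})}\to C_{N,f^{-1}(\mathcal{L})}\to 0
\]
a short exact sequence of $G$-modules, where $C_{N,f^{-1}(\mathcal{L})}:=I_{N,f^{-1}(\mathcal{L})}/P_{N,f^{-1}(\mathcal{L})}$ is the idele class group. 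Then $\widehat H^{0}(G,I_{N,f^{-1}(\mathcal{L})})=I_{M,\mathcal{L}}/f_*(I_{N,f^{-1}(\mathcal{L})})$ and $\widehat H^{0}(G,H_2)=H_2(M,\mathcal{L})/f_*(H_2)$, and a direct chase shows the reverse inclusion amounts to the injectivity of the induced map $\Delta_*\colon \widehat H^{0}(G,H_2)\to\widehat H^{0}(G,I_{N,f^{-1}(\mathcal{L})})$.

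The long exact Tate sequence identifies $\ker\Delta_*$ with the image of $\widehat H^{-1}(G,C_{N,f^{-1}(\mathcal{L})})$, and since $G$ is cyclic, $2$-periodicity gives $\widehat H^{-1}(G,C_{N,f^{-1}(\mathcal{L})})\cong \widehat H^{1}(G,C_{N,f^{-1}(\mathcal{L})})$. Everything thus reduces to a topological Hilbert 90, namely $H^{1}(G,C_{N,f^{-1}(\mathcal{L})})=0$. This is the step I expect to be the main obstacle, exactly as the analogous vanishing is the deep input in the number-theoretic proof. I would attack it through the local--global structure of the idele group: writing $I_{N,f^{-1}(\mathcal{L})}$ as a restricted product over the components $K$ of $f^{-1}(\mathcal{L})$, Shapiro's lemma reduces $H^{1}(G,I_{N,f^{-1}(\mathcal{L})})$ to a sum of local contributions $H^{1}(G_K,-)$, each of which should vanish by a local Hilbert 90 for the boundary-torus homology. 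The cohomology sequence of the displayed short exact sequence then embeds $H^{1}(G,C_{N,f^{-1}(\mathcal{L})})$ into $H^{2}(G,H_2(N,f^{-1}(\mathcal{L})))$, and the remaining task is to show that a class dying locally at every component must vanish globally, i.e.\ the injectivity of $H^{2}(G,H_2)\to\bigoplus_{K}H^{2}(G_K,-)$. This is the analogue of the injection of the global Brauer group into the sum of the local ones; I expect the integral homology sphere hypothesis, via Poincar\'e--Lefschetz duality on $N$ together with the reciprocity of Niibo--Ueki's idelic class field theory, to furnish precisely this injectivity and thereby close the argument.
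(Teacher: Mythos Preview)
Your cohomological plan is genuinely different from the paper's argument, which is elementary and explicit and never invokes Tate cohomology. Combining Proposition~1.9 with Lemma~3.2 (that $\ker\bigl(f_*\colon H_1(Y)\to H_1(X)\bigr)=(\tau-1)H_1(Y)$ for the link complements), the paper shows that any $\mathfrak{a}\in I_{N,\mathfrak{L}}$ with $f_*(\mathfrak{a})\in P_{M,\mathcal{L}}$ can be written $\mathfrak{a}=(\tau-1)\mathfrak{c}+\alpha+\mathfrak{u}$ with $\alpha\in P_{N,\mathfrak{L}}$ and $\mathfrak{u}\in U^{f^{-1}(L_0)}$. Since $f\circ\tau=f$, this gives $f_*(\mathfrak{a})=f_*(\alpha)+f_*(\mathfrak{u})$, so $f_*(\mathfrak{u})\in P_{M,\mathcal{L}}$. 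The punchline is then purely computational: by the explicit Seifert-surface formula for $\Delta_{M,\mathcal{L}}$ (Proposition~2.6), every nonzero principal id\`ele of $M$ has a nonzero longitude component $c_K[\lambda_K]$ for some $K$, whereas by Lemma~1.10 one has $f_*(\mathfrak{u})\in\prod_K\mathbb{Z}[\mu_K]$. Hence $f_*(\mathfrak{u})=0$ and $f_*(\mathfrak{a})=f_*(\alpha)\in f_*(P_{N,\mathfrak{L}})$. The $\mathbb{Z}$HS$^3$ hypothesis enters only through the Seifert-surface basis of $H_2(M,L)$ that makes this longitude/meridian dichotomy available.

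Your plan, by contrast, has a concrete obstruction at the descent step. The identifications $I_{M,\mathcal{L}}=(I_{N,f^{-1}(\mathcal{L})})^G$ and $H_2(M,\mathcal{L})=H_2(N,f^{-1}(\mathcal{L}))^G$ fail as stated. Locally, the decomposition group $G_J$ acts on $\partial V_J$ as the deck group of the torus cover $\partial V_J\to\partial V_K$, i.e.\ by translations, and hence acts \emph{trivially} on $H_1(\partial V_J)\cong\mathbb{Z}^2$. Thus $(I_{N,f^{-1}(\mathcal{L})})^G$ is a product of copies of $H_1(\partial V_J)$, one per $G$-orbit, and $f_*$ sends it into $I_{M,\mathcal{L}}$ only as a proper sublattice whenever some $|G_J|>1$. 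This is exactly where the analogy with number fields breaks: the equality $(E_w^\times)^{G_w}=F_v^\times$ is Galois descent for fields, and there is no counterpart for $H_1$ of tori. Without these identifications, $\widehat H^{0}(G,I_{N,f^{-1}(\mathcal{L})})$ and $\widehat H^{0}(G,H_2(N,f^{-1}(\mathcal{L})))$ are not the groups appearing in the theorem, so your reduction to a ``topological Hilbert~90'' does not go through as written. A cohomological proof in the spirit of Mihara's framework may well exist, but it cannot be a verbatim transcription of the arithmetic argument; the $G$-module structure here is materially different and would have to be reworked from scratch.
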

\setcounter{section}{0}

This paper is organized as follows. In Section 1, we review a topological analogue of local class field theory for a knot in a 3-manifold, and introduce the id\`ele group $I_{M,\mathcal{L}}$ for a 3-manifold $M$ endowed with a very admissible link $\mathcal{L}$.  Then we recall the construction of the diagonal map $\Delta _{M,\mathcal{L}} : H_2 (M,\mathcal{L}) \rightarrow I_{M,\mathcal{L}}$ and introduce the principal id\`{e}le group $P_{M,\mathcal{L}}$. In Section 2, we give an explicit description of the diagonal $\Delta _{M,\mathcal{L}}$ assuming that $M$ is an integral homology 3-sphere by using Seifert surfaces. In Section 3, we prove our main result, namely, a topological analogue of the Hasse norm principle for a finite cyclic covering $N \rightarrow M$ branched over a finite link in $\mathcal{L}$.\\

\noindent 
\textbf{Notation and convention}. 3-manifolds are assumed to be oriented, connected, and closed. A knot is assumed to be tame. For a manifold $X$ and its submanifold $A$, we denote by $H_{n}(X)$ and $H_{n}(X,A)$ the $n$-th homology group and the $n$-th relative homology group with coefficients in $\mathbb{Z}$. 
The branch set of a branched covering of a 3-manifold is assumed to be a finite link. 
When $f:N\rightarrow M$ is a Galois covering, we denote by ${\rm Gal}(N/M)$ the Galois group of $f$. For a knot $K$ in a 3-manifold $M$, $\mu _{K}$ and $\lambda _{K}$ denotes the meridian and the (chosen) longitude of $K$ regarded as elements in $H_1(\partial V_K)$ and several other groups. When $M$ is an integral homology 3-sphere,  $\lambda _{K}$ denotes the preferred longitude of $K$. When $K, K'$ are disjoint knots in an integral homology 3-sphere $M$, then ${\rm lk}(K,K')$ denotes their linking number.

\section{Topological local class field theory and
ide\`eles} 
\label{s1} 
In this section, we recollect the topological local class field theory, the notion of a very admissible link $\mathcal{L}$ in a 3-manifold $M$, and the id\`ele  group and the principal id\`{e}le group of $(M,\mathcal{L})$. Also, we prove a lemma on meridians.  We consult \cite{Morishita2012}, \cite{Niibo1}, and \cite{NiiboUeki} as basic references for this section. For an arithmetic counterpart, we refer to \cite{Neukirch}.

\subsection{Local class field theory}
Let $M$ be an oriented, connected, closed 3-manifold and $K$ a knot in $M$. 
Let $V_{K}$ be a tubular neighborhood of $K$. Let ${\partial V_{K}}^{\rm ab}$  denote the maximal (Abelian) covering of $\partial V_K$ and let ${\partial V_{K}}^{\rm ur}$ denote the covering of $\partial V_K$ which is the restriction of the maximal covering of $V_{K}$ to the boundary. Let $v_{K}:H_{1}(\partial V_{K})\rightarrow H_{1}(V_{K})$ denote the homomorphism induced by $\partial V_{K}\hookrightarrow V_{K}$, and $\rho_{K}:H_{1}(\partial V_{K}) \overset{\cong}{\to}{\rm Gal}({\partial V_{K}}^{{\rm ab}}/\partial V_{K})$ be the isomorphism which comes from the Hurewicz isomorphism.
Then a topological analogue of local class field theory for $\partial V_K$ is formulated as follows.

\begin{thm}
The following diagram of exact sequences commutes. 
$$
\begin{array}{ccccccccc}

0 & \rightarrow & H_{2}(V_{K},\partial V_{K}) & \rightarrow & H_{1}(\partial V_{K}) & \stackrel{v_{K}}{\rightarrow} & H_{1}(V_{K}) & \rightarrow & 0 \\

& & 
\cong \downarrow & & \cong \downarrow \rho_{K} & & \cong \downarrow & & \\

0 &\rightarrow& {\rm Gal}({\partial V_{K}}^{\rm ab}/{\partial V_{K}}^{{\rm ur}}) & \rightarrow & {\rm Gal}({\partial V_{K} }^{{\rm ab}}/\partial V_{K}) & \rightarrow & {\rm Gal}({\partial V_{K} }^{\rm ur}/\partial V_{K}) &\rightarrow & 0.

\end{array}
$$
Here $H_2(V_K, \partial V_K) = \mathbb{Z}[\mu_K]$, $H_1(V_K) = \mathbb{Z}[\lambda_K]$, and $H_1(\partial V_K) \cong \mathbb{Z}[\mu_K] \oplus \mathbb{Z}[\lambda_K]$.

\end{thm}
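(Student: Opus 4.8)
The plan is to realize both rows as standard objects---the homology sequence of a pair on top, the tower sequence of coverings on the bottom---and then reduce commutativity to the naturality of the Hurewicz isomorphism under the inclusion $i\colon \partial V_K \hookrightarrow V_K$.

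First I would identify the top row with the long exact homology sequence of the pair $(V_K,\partial V_K)$. Since $V_K$ is a solid torus it deformation retracts onto its core circle, so $H_2(V_K)=0$ and $H_1(V_K)=\mathbb{Z}[\lambda_K]$, while the meridian disk $D\subset V_K$ with $\partial D=\mu_K$ represents a generator of $H_2(V_K,\partial V_K)\cong H^1(V_K)\cong\mathbb{Z}$ (Lefschetz duality). The connecting homomorphism sends $[D]$ to $\mu_K$, hence is injective with image $\mathbb{Z}[\mu_K]$; and $v_K$ is the inclusion-induced map, which kills $\mu_K$ and carries $\lambda_K$ to the core generator, hence is surjective with kernel $\mathbb{Z}[\mu_K]$. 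This yields exactness of the top row together with the stated identifications $H_2(V_K,\partial V_K)=\mathbb{Z}[\mu_K]$, $H_1(V_K)=\mathbb{Z}[\lambda_K]$, and $H_1(\partial V_K)\cong\mathbb{Z}[\mu_K]\oplus\mathbb{Z}[\lambda_K]$.

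Next I would build the bottom row from covering-space theory. Because $\pi_1(\partial V_K)\cong\mathbb{Z}^2$ is abelian, $\partial V_K^{\mathrm{ab}}$ is the universal cover of the torus and $\rho_K$ is the Hurewicz isomorphism $H_1(\partial V_K)\cong\pi_1(\partial V_K)\cong\mathrm{Gal}(\partial V_K^{\mathrm{ab}}/\partial V_K)$. Since $\pi_1(V_K)=\mathbb{Z}$ is already abelian, the maximal cover of $V_K$ is its universal cover, and $\partial V_K^{\mathrm{ur}}$ is the pullback of this cover along $i$; by the Galois correspondence the pullback corresponds to the subgroup $\ker(v_K)=\mathbb{Z}[\mu_K]\subseteq\pi_1(\partial V_K)$. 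Consequently $\mathrm{Gal}(\partial V_K^{\mathrm{ab}}/\partial V_K^{\mathrm{ur}})\cong\ker(v_K)=\mathbb{Z}[\mu_K]$ and $\mathrm{Gal}(\partial V_K^{\mathrm{ur}}/\partial V_K)\cong\pi_1(\partial V_K)/\ker(v_K)\cong\mathrm{im}(v_K)=H_1(V_K)$, which supplies the left and right vertical isomorphisms. The bottom row is then the exact sequence attached to the tower $\partial V_K^{\mathrm{ab}}\to\partial V_K^{\mathrm{ur}}\to\partial V_K$, exactness being automatic since every subgroup of the abelian group $\pi_1(\partial V_K)$ is normal.

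Finally I would verify commutativity. The middle square commutes by naturality of the Hurewicz isomorphism with respect to $i$: under $\rho_K$ the map $v_K$ on $H_1$ matches the restriction homomorphism $\mathrm{Gal}(\partial V_K^{\mathrm{ab}}/\partial V_K)\to\mathrm{Gal}(\partial V_K^{\mathrm{ur}}/\partial V_K)$. The left square then commutes because $\rho_K$ carries the subgroup $\mathbb{Z}[\mu_K]=\ker(v_K)$ into $\mathrm{Gal}(\partial V_K^{\mathrm{ab}}/\partial V_K^{\mathrm{ur}})$, and the right square commutes by passing to the induced map on quotients. I expect the main obstacle to be the careful bookkeeping in identifying $\partial V_K^{\mathrm{ur}}$ as the pullback cover and matching $\mathrm{Gal}(\partial V_K^{\mathrm{ur}}/\partial V_K)$ with $H_1(V_K)$ through $v_K$ in a manner compatible with $\rho_K$; once this single naturality statement is pinned down, both outer squares follow formally.
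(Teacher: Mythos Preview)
The paper does not supply its own proof of this theorem; it is stated as background, with \cite{Morishita2012}, \cite{Niibo1}, and \cite{NiiboUeki} given as references for the section. Your argument is correct and is essentially the standard one found in those sources: the top row is the long exact sequence of the pair $(V_K,\partial V_K)$, truncated using $H_2(V_K)=0$ and the surjectivity of $v_K$; the bottom row is the Galois sequence of the tower $\partial V_K^{\mathrm{ab}}\to\partial V_K^{\mathrm{ur}}\to\partial V_K$, with $\partial V_K^{\mathrm{ur}}$ identified via the Galois correspondence as the cover corresponding to $\ker(i_*\colon\pi_1(\partial V_K)\to\pi_1(V_K))$; and commutativity is naturality of the Hurewicz map under $i$. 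One minor simplification: you do not need Lefschetz duality to compute $H_2(V_K,\partial V_K)$, since exactness together with $H_2(V_K)=0$ already identifies it with $\ker v_K=\mathbb{Z}[\mu_K]$ via the connecting map.
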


\subsection{Id\`ele group and principal id\`ele group}
Next, we recall the notion of a very admissible link $\mathcal{L}$ of 3-manifold $M$ and the id\`ele and principal id\`ele groups of $(M,\mathcal{L})$.
\begin{defn}

Let $M$ be an oriented connected closed 3-manifold and let $\mathcal{L}$ be a link of $M$ consisting of countably many (finite or infinite) tame components. We call  $\mathcal{L}$  a {\em very admissible link} if $\mathcal{L}$ satisfies the following condition: For any finite cover $f :N\rightarrow M$ branched over a finite sublink $L$ of $\mathcal{L}$, $H_{1}(N)$ is generated by the homology classes of components of $f^{-1}(\mathcal{L})$.

\end{defn}
By Definition 1.2, if $\mathcal{L}$ is a very admissible link of $M$ and $f: N \rightarrow M$ is a finite covering branched over a finite sublink $L$ of $\mathcal{L}$, then $f^{-1}(\mathcal{L})$ is again a very admissible link of $N$.  The following theorem is fundamental.

\begin{thm}[cf.~{\cite[Theorem 2.3]{NiiboUeki}}] 
Any oriented, connected, closed 3-manifold $M$ contains a very admissible link $\mathcal{L}$.  
\end{thm}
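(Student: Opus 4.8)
The plan is to construct $\mathcal{L}$ as an increasing union $\mathcal{L}=\bigcup_{n\ge 0}\mathcal{L}_n$ of finite, pairwise disjoint collections of tame knots, built by a countable induction, so that the generation condition of Definition 1.2 is forced to hold for every finite branched cover over every finite sublink. The engine of the argument is a single monotonicity observation: if $f:N\to M$ is a finite cover branched over a finite sublink $L$, and the homology classes of the components of $f^{-1}(\mathcal{L}')$ already generate $H_1(N)$ for some finite $\mathcal{L}'\supseteq L$, then the same holds for any larger link $\mathcal{L}''\supseteq\mathcal{L}'$, because over $\mathcal{L}''\setminus L$ the map $f$ is an honest covering and $f^{-1}(\mathcal{L}''\setminus L)$ contributes only additional circle components, hence only enlarges the generating set. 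Thus once a cover is ``satisfied'' it stays satisfied as we keep appending knots, which is what makes an inductive construction feasible.

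The geometric input is the following Key Lemma: given a connected finite covering $f:N\to M$ branched over a finite link $L$ and a finite link $\mathcal{L}_0\supseteq L$, there exist finitely many disjoint tame knots $K_1,\dots,K_m$ in $M\setminus\mathcal{L}_0$ such that $H_1(N)$ is generated by the classes of the components of $f^{-1}(\mathcal{L}_0\cup K_1\cup\cdots\cup K_m)$. To prove it I would first use that $N$ is a compact $3$-manifold, so $H_1(N)$ is finitely generated; choose loops representing a finite generating set. In a $3$-manifold every loop is freely homotopic to an embedded knot, so I obtain embedded knots $\widetilde K_1,\dots,\widetilde K_m$ in $N$ whose classes generate $H_1(N)$. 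After a general-position perturbation (legitimate since $1+1<3$) I may assume each $\widetilde K_i$ is disjoint from $f^{-1}(\mathcal{L}_0)$, that $f$ restricts to an embedding on $\widetilde K_i$ with image a tame knot $K_i:=f(\widetilde K_i)\subseteq M\setminus\mathcal{L}_0$, and that the $K_i$ are mutually disjoint. Since $K_i$ avoids the branch link $L$, the covering is unbranched over $K_i$, so $f^{-1}(K_i)$ is a disjoint union of circles of which $\widetilde K_i$ is one component. Hence each generator $[\widetilde K_i]$ is the class of a component of $f^{-1}(\mathcal{L}_0\cup K_1\cup\cdots\cup K_m)$, proving the lemma.

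Finally I would set up the countable bookkeeping. For a fixed finite link $L$, the complement $M\setminus L$ has finitely generated fundamental group, hence only countably many finite-index subgroups; via Fox completion these correspond to the countably many connected finite coverings of $M$ branched over sublinks of $L$. Since the final $\mathcal{L}$ is countable, its finite sublinks form a countable family, so the totality of pairs (finite sublink $L\subseteq\mathcal{L}$, connected finite cover $N\to M$ branched over $L$) that must be checked is countable. I would enumerate these ``tasks'' by dovetailing: at each stage $\mathcal{L}_n$ is finite, its finite sublinks are finitely many, and the covers over them are countable, so the tasks available at stage $n$ can be listed, and a standard diagonal enumeration ensures that every task which ever becomes available---i.e. whose branch link satisfies $L\subseteq\mathcal{L}_n$ for some $n$, which holds for every finite $L\subseteq\mathcal{L}$ since $L$ is finite---is processed at some finite stage. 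Processing a task means applying the Key Lemma to append finitely many new knots (kept disjoint from the current $\mathcal{L}_n$ by general position) that make the preimage components generate $H_1(N)$; by the monotonicity observation this persists in the final $\mathcal{L}$. The resulting $\mathcal{L}$ is then a link of countably many disjoint tame components satisfying Definition 1.2.

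I expect the main obstacle to be organizing this bootstrapping correctly: the family of finite sublinks one must satisfy is not known in advance, since it depends on $\mathcal{L}$ itself, so the construction must simultaneously grow the link and discharge the growing list of covering obligations. The monotonicity principle is precisely what defuses this circularity, and the dovetailing enumeration is what guarantees completeness; the underlying topology---finite generation of $H_1(N)$ and general position for knots in $3$-manifolds---is comparatively routine.
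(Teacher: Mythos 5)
Your proposal is correct, but note that the paper itself gives no proof of this statement---it simply cites Theorem 2.3 of Niibo--Ueki, and your argument is essentially the construction given there: an increasing union of finite links built by a dovetailing enumeration of the (countably many, since $\pi_1(M\setminus L)$ is finitely generated) finite covers branched over finite sublinks, where each cover is ``satisfied'' by representing a finite generating set of $H_1(N)$ by embedded knots, pushing them down by general position, and using monotonicity of the generation condition. Your treatment of the two delicate points---that $f$ can be made injective on each $\widetilde K_i$ by a generic perturbation (legitimate since $f$ is a local homeomorphism off the branch locus and double points of a circle in a $3$-manifold are generically avoidable), and that the circularity in the family of tasks is defused by the fact that every finite sublink of the final $\mathcal{L}$ lies in some finite stage $\mathcal{L}_n$---matches the cited proof, so there is nothing substantive to add.
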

Hereafter, we fix a very admissible link $\mathcal{L}$ in a 3-manifold $M$. We may assume that $\mathcal{L}$ is endowed with tubular neighborhoods. Indeed, by the method of blow-up, we may essentially assume that $\mathcal{L}$ is endowed with a tubular neighborhood $V_\mathcal{L}=\sqcup_{K\subset \mathcal{L}} V_K$, which is the disjoint union of tori (cf.~\cite{NiiboUeki2023RMS}). Note that we may instead consider families of tubular neighborhoods with natural identifications of their groups (cf.~\cite{NiiboUeki}), or work over the system of formal tubular neighborhoods as well (cf.~\cite{Mihara2019Canada}).

\begin{defn} We define the {\em id\`ele group} of $(M,\mathcal{L})$ by
$$I_{M,\mathcal{L}}:=\{(a_{K})_{K}\in \underset{K\subset \mathcal{L}}{\prod}H_{1}(\partial V_{K})\mid v_{K}(a_{K})=0 \mbox{ for almost all} \  K \subset \mathcal{L}\},$$
where $K \subset \mathcal{L}$ runs through all components of $\mathcal{L}$. An element of $I_{M,\mathcal{L}}$ is called an {\em id\`{e}le} of $(M,\mathcal{L})$.
\end{defn}

In order to define the principal id\`ele group, we define the $H_{2}(M,\mathcal{L})$ and construct the $\Delta _{M,\mathcal{L}}:H_2(M,\mathcal{L})\to I_{M,\mathcal{L}}$. For any finite sublinks $L,L'$ of $ \mathcal{L}$ with $L\subset L'$, there exists a natural injection $j_{L,L'}:H_{2}(M,L)\hookrightarrow H_{2}(M,L')$. Then $((H_2(M,L))_{L \subset \mathcal{L}}, (j_{L,L'})_{L \subset L'\subset \mathcal{L}})$ forms a direct system.\\

\begin{defn}
We define $H_{2}(M,\mathcal{L})$ by the following direct limit:
$$
H_{2}(M,\mathcal{L}):=\underset{L\subset \mathcal{L}}{\varinjlim}H_{2}(M,L)
=\underset{L\subset \mathcal{L}}{\bigsqcup}H_{2}(M,L)/\thicksim.\\
$$
Here, for $S_{L}\in H_{2}(M,L)$ and $S_{L'}\in H_{2}(M,L')$, we write $S_{L}\thicksim S_{L'}$ if and only if there exists a finite link $ L''\subset \mathcal{L}$ such that $L\cup L'\subset L'' \ \mbox{and}\   j_{L,L''}(S_{L})=j_{L',L''}(S_{L'})$.
\end{defn}
Let $L$ be a finite link of $\mathcal{L}$. We put $V_{L}:=\bigsqcup _{K\subset L}V_{K}$ and $X_{L}:=M\setminus {\rm Int}(V_L)$ $(\simeq M \setminus L)$. By the excision isomorphism ${\rm exc}$ and the relative homology exact sequence for a pair $(M, V_{L})$, we obtain a sequence
$$
H_{2}(M,L)\overset{\cong}{\rightarrow} H_{2}(M,V_{L})\overset{{\rm exc}}{\rightarrow} H_{2}(X_{L},\partial V_{L})\overset{\partial}{\rightarrow} H_{1}(\partial V_{L}).
\leqno{(1.6)}
$$
Moreover, for any finite links $L,L'\subset \mathcal{L}$ with $L\subset L'$, we have a commutative diagram
$$
\begin{array}{ccc}

H_{2}(M,L') & \overset{\partial}{\rightarrow} & H_{1}(\partial V_{L'})\\

j_{L,L'}\uparrow &\circlearrowright & \downarrow p_{L',L} \\

H_{2}(M,L) & \overset{\partial}{\rightarrow} & H_{1}(\partial V_{L}).

\end{array}
\leqno{(1.7)}
$$
Taking the direct limit for $H_2(M,L)$ and the projective limit for $H_1(V_L)$ with respect to finite sublinks $L \subset \mathcal{L}$, we obtain a natural homomorphism $\Delta _{M,\mathcal{L}}:H_{2}(M,\mathcal{L})\rightarrow I_{M,\mathcal{L}}$ called the \emph{diagonal map}. 
\setcounter{theoremcounter}{7}
\begin{defn}
We define the {\em principal id\`ele group} of $(M,\mathcal{L})$ by
$$
P_{M,\mathcal{L}}:={\rm Im}(\Delta_{M,\mathcal{L}}).
$$
An element of $P_{M,\mathcal{L}}$ is called a principal id\`{e}le of $(M,\mathcal{L})$.
\end{defn}

Finally, for a finite sublink $L \subset \mathcal{L}$,  we define the subgroup $U^{L}$ of $I_{M,\mathcal{L}}$ by $U^L :=\prod _{K\subset \mathcal{L} \setminus L}\mathbb{Z}[m_{K}]$. The following proposition will be used in Section 3.

\begin{prop}[{\cite[Lemma 5.7]{NiiboUeki}}]
We have an isomorphism
$$
I_{M,\mathcal{L}}/(P_{M,\mathcal{L}}+U^{L})\cong H_{1}(X_{L}).
$$
\end{prop}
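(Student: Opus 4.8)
The plan is to construct an explicit surjection $\Psi\colon I_{M,\mathcal{L}}\to H_1(X_L)$ and then identify its kernel with $P_{M,\mathcal{L}}+U^L$. For each component $K\subset\mathcal{L}$ the torus $\partial V_K$ sits inside $X_L$ — as part of $\partial X_L$ when $K\subset L$, and inside $V_K\subset X_L$ when $K\subset\mathcal{L}\setminus L$ — so the inclusion induces $\iota_K\colon H_1(\partial V_K)\to H_1(X_L)$. I would set $\Psi((a_K)_K):=\sum_K\iota_K(a_K)$. To see this is a finite, hence well-defined, sum, note that for almost all $K$ we have $v_K(a_K)=0$, i.e. $a_K\in\ker v_K=\mathbb{Z}[\mu_K]$ by Theorem 1.1; and for $K\subset\mathcal{L}\setminus L$ the meridian $\mu_K$ bounds a meridian disk in $V_K\subset X_L$, so $\iota_K(\mu_K)=0$. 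Thus only the finitely many $K\subset L$, together with the finitely many $K$ where $a_K\notin\mathbb{Z}[\mu_K]$, contribute. The same computation shows at once that $U^L\subseteq\ker\Psi$.

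Next I would record that $P_{M,\mathcal{L}}\subseteq\ker\Psi$. A principal idèle is $\Delta_{M,\mathcal{L}}(S)$ for some $S\in H_2(M,\mathcal{L})$ represented by $S_{L'}\in H_2(M,L')$ with $L\subseteq L'$ finite; by the construction via (1.6) its $L'$-supported part is $\partial S_{L'}\in\mathrm{im}\big(\partial\colon H_2(X_{L'},\partial V_{L'})\to H_1(\partial V_{L'})\big)$ while its remaining components are meridians, hence lie in $U^L$. Since $X_{L'}\subseteq X_L$, the map $\iota$ on $\partial V_{L'}$ factors through $H_1(X_{L'})$, where $\partial S_{L'}$ vanishes by exactness of the homology sequence of the pair $(X_{L'},\partial V_{L'})$; so $\Psi(\Delta_{M,\mathcal{L}}(S))=0$. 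For surjectivity I would use very admissibility: by Mayer--Vietoris for $M=X_L\cup V_L$ the map $H_1(X_L)\to H_1(M)$ is onto with kernel generated by the meridians $\mu_K$, $K\subset L$; since $H_1(M)$ is generated by the classes $[\lambda_K]=[K]$, $K\subset\mathcal{L}$ (the very admissible condition for the trivial cover), and each such class lifts to $\iota_K(\lambda_K)$, while each kernel generator is $\iota_K(\mu_K)$, the image of $\Psi$ is all of $H_1(X_L)$.

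The heart of the proof is the reverse inclusion $\ker\Psi\subseteq P_{M,\mathcal{L}}+U^L$, and this is where I expect the real work. Given $(a_K)\in\ker\Psi$, choose a finite $L'\supseteq L$ with $a_K\in\mathbb{Z}[\mu_K]$ for all $K\not\subset L'$; those components already lie in $U^L$, so after subtracting them I may assume $(a_K)$ is supported on $L'$, i.e. $(a_K)\in H_1(\partial V_{L'})$ with $\iota$-image $0$ in $H_1(X_L)$. Now I would run the composite $H_1(\partial V_{L'})\to H_1(X_{L'})\to H_1(X_L)$ in two stages. By Mayer--Vietoris for $X_L=X_{L'}\cup\bigsqcup_{K\subset L'\setminus L}V_K$ the kernel of $H_1(X_{L'})\to H_1(X_L)$ is generated by the meridians $\mu_K$, $K\subset L'\setminus L$; hence after subtracting a suitable combination of these (again an element of $U^L$) the class maps to $0$ in $H_1(X_{L'})$. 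By exactness of the homology sequence of $(X_{L'},\partial V_{L'})$ the resulting element lies in $\mathrm{im}(\partial\colon H_2(X_{L'},\partial V_{L'})\to H_1(\partial V_{L'}))$, which via (1.6) is the $L'$-supported part of $\Delta_{M,\mathcal{L}}(H_2(M,L'))$, i.e. a principal idèle modulo further meridian terms in $U^L$. Collecting the corrections shows $(a_K)\in P_{M,\mathcal{L}}+U^L$.

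The delicate points to get right are the passage through the direct limit defining $I_{M,\mathcal{L}}$ (the choice of $L'$ and the fact that every discarded component is a meridian lying in $U^L$) and the bookkeeping ensuring that each correction term is genuinely either principal or in $U^L$; the underlying topology is just the two standard exact sequences, namely the homology sequence of the pair $(X_{L'},\partial V_{L'})$ and the Dehn-filling Mayer--Vietoris sequence. I would also note the minor care needed for disconnected $\partial V_{L'}$ in applying Mayer--Vietoris, but this affects only the $H_0$-terms and not the $H_1$-level statements used above.
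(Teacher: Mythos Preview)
The paper does not supply a proof of this proposition at all: it is stated with a citation to \cite[Lemma~5.7]{NiiboUeki} and used as a black box in the proof of Theorem~3.1. There is therefore no in-paper argument to compare against.

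Your proof is correct and self-contained. A few confirmations on the points you flagged as delicate: the excision map in (1.6) is an isomorphism, so $\mathrm{im}\big(\partial\colon H_2(X_{L'},\partial V_{L'})\to H_1(\partial V_{L'})\big)=\mathrm{im}(\Delta_{L'})$, which is exactly what you need for the last step of the kernel inclusion; and the fact that the components of $\Delta_{M,\mathcal{L}}(S)$ outside $L'$ (for $S\in H_2(M,L')$) are multiples of meridians---used both for $P_{M,\mathcal{L}}\subset\ker\Psi$ and for identifying the ``tail'' of the principal correction as lying in $U^L$---holds for arbitrary $M$, not just integral homology spheres, since a relative $2$-cycle in $(M,L')$ made transverse to $K\not\subset L'$ meets a small $V_K$ only in meridian disks. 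Your surjectivity argument via the exact sequence of the pair $(M,X_L)$ (what you call Mayer--Vietoris) together with the very admissible condition for the identity cover is also fine.
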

\subsection{A lemma on meridians}
\begin{lem}
Let $M$ be an oriented connected closed 3-manifold endowed with a very admissible link $\mathcal{L}$. Let $f:N\rightarrow M$ be a finite covering branched over a finite link $L_{0}\subset \mathcal{L}$. 
Let $f_*:I_{N,f^{-1}(\mathcal{L})}\rightarrow I_{M,\mathcal{L}}$ denote 
the homomorphism induced by $f$.
Then, we have 
$$
f_{*}(\prod_{J\subset f^{-1}(\mathcal{L})}\mathbb{Z}[\mu_J])\subset \prod_{K\subset \mathcal{L}}\mathbb{Z}[\mu_{K}].
$$
\end{lem}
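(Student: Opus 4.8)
The plan is to reduce the global inclusion to a local claim about each component. First I would recall that the induced map $f_*$ on id\`eles is assembled componentwise: for a component $K$ of $\mathcal{L}$, with the finitely many components $J$ of $f^{-1}(\mathcal{L})$ lying over it, the $K$-entry of $f_*((a_J)_J)$ is a sum of the images $(f|_{\partial V_J})_*(a_J)$ under the maps $(f|_{\partial V_J})_* : H_1(\partial V_J) \to H_1(\partial V_K)$ induced by the boundary covering $f|_{\partial V_J} : \partial V_J \to \partial V_K$. Hence it suffices to prove the local claim that $(f|_{\partial V_J})_*(\mu_J) \in \mathbb{Z}[\mu_K]$ for every such pair $(J,K)$; the global inclusion then follows at once, since a sum of elements of the subgroup $\mathbb{Z}[\mu_K]$ again lies in $\mathbb{Z}[\mu_K]$.

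To establish the local claim I would avoid any explicit computation of the matrix of $(f|_{\partial V_J})_*$ and instead use the characterization $\ker(v_K) = \mathbb{Z}[\mu_K]$ provided by the exact sequence of Theorem 1.1. Choosing the tubular neighborhoods compatibly with $f$ (as in the blow-up setup fixed above), we have $f^{-1}(V_K) = \bigsqcup_{J \mid K} V_J$, and the restriction $f|_{V_J} : V_J \to V_K$ is a (possibly branched) covering of solid tori whose restriction to boundaries is exactly $f|_{\partial V_J}$. Functoriality of $H_1$ applied to the square formed by these two coverings and the boundary inclusions $\partial V_J \hookrightarrow V_J$ and $\partial V_K \hookrightarrow V_K$ then yields the naturality relation $v_K \circ (f|_{\partial V_J})_* = (f|_{V_J})_* \circ v_J$. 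Since $\mu_J$ bounds a meridian disk in $V_J$, we have $v_J(\mu_J)=0$ in $H_1(V_J)$, so $v_K\bigl((f|_{\partial V_J})_*(\mu_J)\bigr) = (f|_{V_J})_*(v_J(\mu_J)) = 0$, whence $(f|_{\partial V_J})_*(\mu_J) \in \ker(v_K) = \mathbb{Z}[\mu_K]$, as claimed.

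The step I expect to demand the most care is checking that the neighborhoods can indeed be arranged so that $f|_{V_J} : V_J \to V_K$ is a covering of solid tori restricting to the boundary torus covering, so that the naturality square genuinely commutes. This is precisely where the branch link $L_0$ intervenes: away from $L_0$ the map $f$ is an honest covering, while near a component of $L_0$ it is modeled by the standard branched cover of a solid torus over its core, which nonetheless restricts to an unbranched covering of the boundary torus. Once this compatibility is in hand, the argument is uniform in whether or not $K \subset L_0$, and the kernel characterization does all the work; no coordinate computation, such as $(f|_{\partial V_J})_*(\mu_J) = e\,\mu_K$ in terms of a branch index $e$, is required.
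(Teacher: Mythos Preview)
Your proposal is correct and follows essentially the same approach as the paper: both reduce to a local claim for each pair $(J,K)$, form the commutative square of inclusions $\partial V_J\hookrightarrow V_J$ and $\partial V_K\hookrightarrow V_K$ together with the covering maps, and use that $\mu_J$ lies in $\ker v_J$ to conclude $(f|_{\partial V_J})_*(\mu_J)\in\ker v_K=\mathbb{Z}[\mu_K]$. The paper phrases the key input as ``by the definition of Fox completion, $[\mu_J]$ generates $\ker(H_1(\partial V_J)\to H_1(V_J))$'', which is exactly your $v_J(\mu_J)=0$ together with the compatibility of neighborhoods in the branched case that you flag in your final paragraph.
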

 
\begin{proof}
Let $K$ be a knot in $\mathcal{L}$, let $J$ be a component of $f^{-1}(K)$, 
and let $f_{\#}:H_{1}(\partial V_{f^{-1}(K)})\rightarrow H_{1}(\partial V_{K})$ denote the homomorphism induced by $f|_{\partial V_{f^{-1}(K)}}:\partial V_{f^{-1}(K)}\rightarrow \partial V_{K}$. 
By the commutative diagram
$$
\begin{array}{ccc}
H_{1}(\partial V_{f^{-1}(K)}) & \hookrightarrow & I_N\\

f_{\#} \downarrow &\circlearrowright & \downarrow f_{*} \\

H_{1}(\partial V_K) & \hookrightarrow & I_M,
\end{array}
$$
we have that $f_*([\mu_J])\in H_1(\partial V_K)$, where $H_1(\partial V_K)$ is seen as a direct product component of $I_{M,\mathcal{L}}$. 

Let $V_J$ denote the component of $f^{-1}(V_K)$ containing $J$. 
Then, we have the following commutative diagram
$$
\begin{array}{ccc}

\partial V_J & \hookrightarrow & V_J\\

\downarrow &\circlearrowright & \downarrow \\

\partial V_K & \hookrightarrow & V_K.

\end{array}
\leqno{(*)}
$$
By the definition of Fox completion, the meridian $[\mu_J] \in H_1(\partial V_J)$ of $J$ generates Ker$(H_1(\partial V_J)\rightarrow H_1(V_J))$. 
So, we have the following commutative diagram of exact sequences 
$$
\begin{array}{ccccccccc}

0 &  \rightarrow &\langle [\mu_J]\rangle &\hookrightarrow& H_{1}(\partial V_J) & \rightarrow & H_{1}(V_J) & \rightarrow & 0 \\

& &  & & \downarrow f_{\#} & \circlearrowright & \downarrow f_{\#}& & \\

0 &  \rightarrow &\langle [\mu_K]\rangle &\hookrightarrow& H_{1}(\partial V_{K}) & \rightarrow & H_{1}(V_{K}) & \rightarrow & 0 \\

\end{array}
$$
induced by ($\ast$). 
Thus, 
we have $f_\#([\mu_J])\in \langle[\mu_K]\rangle ={\rm Ker}(H_1(\partial V_K)\rightarrow H_1(V_K))$. 
\end{proof}

\section{Explicit description of the diagonal map $\Delta_{M,\mathcal{L}}$} \label{s2}

In this section, we explicitly describe the diagonal map $\Delta_{M,\mathcal{L}}$. Hereafter, $M$ is supposed to be an integral homology 3-sphere endowed with a very admissible link $\mathcal{L}$ unless otherwise mentioned.

Let $L$ be a finite sublink of $\mathcal{L}$ with its tubular neighborhood $V_L = \bigsqcup_{K \subset L} V_K$,  where $K \subset L$ runs through the components of $L$. Let $S_K$ denote a Seifert surface of $K$. 
We will describe the composition map 
$$ 
\Delta_L :  H_{2}(M,L)  \cong H_{2}(M,V_{L}) \stackrel{{\rm exc}}{\rightarrow} H_2(X_L, \partial V_L)
 \overset{\partial}{\rightarrow}H_{1}(\partial V_{L}) = \prod_{K \subset L} H_1(\partial V_K)
$$ 
of the maps in (1.6) in an explicit manner.

\begin{lem} \label{lem.H2ML} 
The relative homology group $H_2(M,L)$ is the abelian group freely generated by the homology classes of the Seifert surfaces $S_K$ of components $K$ of $L$:
$$H_{2}(M,L) = \bigoplus_{K \subset L} \mathbb{Z}[S_K].$$
\end{lem}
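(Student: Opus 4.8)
The plan is to read off $H_2(M,L)$ directly from the long exact sequence of the pair $(M,L)$ and then to identify the images of the Seifert-surface classes under the connecting homomorphism. Write $L = K_1 \cup \cdots \cup K_n$ as a disjoint union of $n$ tame circles. The relevant portion of the long exact sequence is
\[
H_2(L) \to H_2(M) \to H_2(M,L) \xrightarrow{\ \partial\ } H_1(L) \to H_1(M).
\]

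First I would substitute the known groups. Since each component $K$ is a circle, $H_2(L) = 0$ and $H_1(L) = \bigoplus_{K \subset L} \mathbb{Z}[K]$, the free abelian group on the fundamental classes of the components. Since $M$ is an integral homology $3$-sphere, $H_2(M) = 0$ and $H_1(M) = 0$. Feeding these into the sequence collapses it to a short exact sequence
\[
0 \to H_2(M,L) \xrightarrow{\ \partial\ } H_1(L) \to 0,
\]
so the connecting map $\partial$ is an isomorphism onto $H_1(L) = \bigoplus_{K \subset L}\mathbb{Z}[K]$.

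It then remains to identify the basis. Because $H_1(M) = 0$, every component $K$ is null-homologous and therefore bounds a Seifert surface $S_K$ with $\partial S_K = K$; viewing $S_K$ as a relative $2$-cycle of $(M,L)$ yields a class $[S_K] \in H_2(M,L)$. By the description of the connecting homomorphism, $\partial[S_K] = [\partial S_K] = [K]$, the chosen generator of the summand $\mathbb{Z}[K]$ of $H_1(L)$. Thus $\partial$ carries the family $\{[S_K]\}_{K\subset L}$ bijectively onto the standard basis $\{[K]\}_{K\subset L}$ of $H_1(L)$. Since $\partial$ is an isomorphism, $\{[S_K]\}_{K\subset L}$ is a basis of $H_2(M,L)$, which proves $H_2(M,L) = \bigoplus_{K\subset L}\mathbb{Z}[S_K]$.

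I expect no serious obstacle here: the computation is forced once the homology-sphere hypothesis is invoked. The only points requiring a little care are the orientation bookkeeping---ensuring that $\partial[S_K]$ equals $[K]$ rather than $-[K]$, which is arranged by orienting $S_K$ compatibly with the given orientation of $K$---and the observation that $[S_K]$ is independent of the choice of Seifert surface, which is automatic because $\partial$ is injective and $\partial[S_K]$ depends only on $K$.
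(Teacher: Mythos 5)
Your argument is correct and is essentially the same as the paper's: both use the long exact sequence of the pair $(M,L)$ together with $H_1(M)=H_2(M)=0$ to conclude that $\partial:H_2(M,L)\to H_1(L)=\bigoplus_{K\subset L}\mathbb{Z}[K]$ is an isomorphism, and then identify $[S_K]$ with the preimage of $[K]$. Your extra remarks on orientation and independence of the choice of Seifert surface are sensible refinements but do not change the route.
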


\begin{proof} By the relative homology exact sequence 
$$
H_{2}(M)\rightarrow H_{2}(M,L) \overset{\partial}{\rightarrow} H_{1}(L)\rightarrow H_{1}(M)
$$
for $(M,L)$ and the assumption that $H_1(M)=H_2(M)=0$, 
we have that $\partial :H_2(M,L)\to H_1(L)$ is an isomorphism. 
Since $H_1(L) = \bigoplus_{K \subset L} H_1(K) = \bigoplus_{K \subset L} \mathbb{Z}[K]$ and $\partial$ is the boundary map, we obtain the assertion.
\end{proof}

Next, we describe the excision isomorphism. Let $K$ be a component of $L$. For $K' \subset (L \setminus K)$, let $D_{K'} := S_K \cap V_{K'}$, which is a disk on $S_K$ or an emptyset.  We define $A_K$ to be  $S_{K} \setminus \cup_{K' \subset (L \setminus K)} {\rm Int}(D_{K'})$, which is a punctured surface.

\begin{center}
  \begin{tikzpicture}
    \node at (1.7,-1) { $K$};
\node at (0.6,2.8) { $K'$};

      \fill [lightgray] [domain=-90:270,variable=\t,samples=200] plot ({3*cos(\t)},{1*sin(\t)});
       \fill[white] [domain=95:440,variable=\t,samples=200] plot({0.8*cos(\t)},{0.4*sin(\t)});
         \draw [domain=-90:87,variable=\t,samples=200] plot ({3*cos(\t)},{1*sin(\t)});
           \draw [domain=91:270,variable=\t,samples=200] plot ({3*cos(\t)},{1*sin(\t)});
         \draw [domain=120:180,variable=\t,samples=200] plot ({1+1*cos(\t)},{3*sin(\t)});
       
       \draw [domain=200:205,variable=\t,samples=200] plot ({1+1*cos(\t)},{3*sin(\t)});
       \draw [domain=95:440,variable=\t,samples=200] plot ({0.8*cos(\t)},{0.4*sin(\t)});

      \node at (1.6,0.4) { $A_{K}$};
\node at (0.4,-0.1) { $D _{K'}$};
\end{tikzpicture}
\end{center}

The excision isomorphism ${\rm exc}:H_2(M,L)\overset{\cong}{\to} H_2(X_L,\partial X_L)$ sends $[S_K]$ to $[A_K]$. Hence we have the following. 
\begin{lem} \label{lem.H2XLVL} 
We have
$$ H_{2}(X_L, \partial V_L) = \bigoplus_{K \subset L} \mathbb{Z}[A_K].$$
\end{lem}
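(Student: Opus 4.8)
The plan is to deduce the statement directly from \cref{lem.H2ML} together with the excision isomorphism appearing in the sequence (1.6), so that the whole argument reduces to two points: that ${\rm exc}$ is an isomorphism, and that it carries each generator $[S_K]$ to the class $[A_K]$.

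First I would spell out why ${\rm exc}$ is an isomorphism. The pair $(M,V_L)$ deformation retracts onto $(M,L)$, since each tubular neighborhood $V_K$ retracts onto its core $K$; this gives the first isomorphism $H_2(M,L)\cong H_2(M,V_L)$ in (1.6). Writing $M = X_L \cup V_L$ with $X_L = M\setminus {\rm Int}(V_L)$ and $X_L \cap V_L = \partial V_L$, the excision theorem (after thickening $V_L$ slightly so that the excised open set has closure contained in the interior) yields $H_2(M,V_L)\cong H_2(X_L,\partial V_L)$. Composing the two gives the isomorphism ${\rm exc}$.

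Next, because $H_2(M,L)=\bigoplus_{K\subset L}\mathbb{Z}[S_K]$ is free on the classes $[S_K]$ by \cref{lem.H2ML}, and ${\rm exc}$ is an isomorphism, the target $H_2(X_L,\partial V_L)$ is free on the images ${\rm exc}([S_K])$; it therefore suffices to identify ${\rm exc}([S_K])=[A_K]$. I would carry this out at the chain level. After an isotopy putting $S_K$ in general position with respect to $V_L$, one may assume that $S_K \cap V_{K'}$ is a disjoint union of meridian disks $D_{K'}$ for each $K'\subset L\setminus K$, while the boundary of $S_K$ is a longitude lying on $\partial V_K$, so that $S_K$ meets ${\rm Int}(V_K)$ not at all. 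Under the deformation retraction and excision, the relative cycle $S_K$ in $(M,V_L)$ is carried to its restriction to $X_L$, which is exactly $A_K = S_K \setminus \bigcup_{K'\subset L\setminus K}{\rm Int}(D_{K'})$, a relative $2$-cycle of $(X_L,\partial V_L)$. Hence ${\rm exc}([S_K])=[A_K]$, and the lemma follows.

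The step I expect to require the most care is this last chain-level identification: one must choose the Seifert surfaces so that they meet the tubular neighborhoods in standard pieces (meridian disks for the neighboring components, together with a boundary longitude on $\partial V_K$), and then verify that excision genuinely corresponds to restriction of the representing cycle to $X_L$. Once this is in place, the algebraic conclusion that the $[A_K]$ form a free basis is immediate, since an isomorphism sends a free basis to a free basis.
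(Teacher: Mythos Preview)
Your proposal is correct and follows exactly the paper's approach: the paper's entire argument is the single sentence preceding the lemma, ``The excision isomorphism ${\rm exc}:H_2(M,L)\overset{\cong}{\to} H_2(X_L,\partial V_L)$ sends $[S_K]$ to $[A_K]$,'' combined with \cref{lem.H2ML}. You have simply unpacked this by explaining why the composite in (1.6) is an isomorphism and by tracing the generator $[S_K]$ through it at the chain level, which is precisely the content the paper leaves implicit.
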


In order to describe the boundary map $\partial : H_2(X_L, \partial V_L) \rightarrow H_1(\partial V_L) = \bigoplus_{K \subset L} H_1(\partial V_K)$ explicitly, we give an orientation to each component $K \subset L$ so that $S_K$ has an orientation which is compatible with the orientation of $\partial S_K = K$. It gives an orientation of $\partial A_K$. For each component $K \subset L$, the orientation of the longitude $\lambda_K \subset \partial V_K$ is the same as $K$ and the orientation of the meridian $\mu_K \subset \partial V_K$ is determined by ${\rm lk}(\mu_K,K) = 1$. By the definition of $A_K$, we have the following. 

\begin{lem} \label{lem.derAK} 
We have 
$$
 \partial ([A_{K}]) = [\lambda _{K}] - \sum_{K' \subset L \setminus K}{\rm lk}(K, K')[\mu_{K'}].
$$
\end{lem}

\begin{center}
\begin{tikzpicture}[>=stealth]


    \node at (-0.2,1) { $K$};

\node at (3,-0.5) {$\lambda _{K}$};

\node at (1.2,2.73) { $K'$};

\node at (2.8,2.73) { $K'$};

\node at (0.3,1.9) { $\mu _{K'}$};

\node at (3.2,1.9) { $\mu _{K'}$};


      \fill [lightgray] [domain=-90:90,variable=\t,samples=200] plot ({3*cos(\t)},{1*sin(\t)});

       \fill[white] [domain=95:440,variable=\t,samples=200,->] plot({0.6+0.4*cos(\t)},{0.2*sin(\t)});

        \fill[white] [domain=95:440,variable=\t,samples=200,->] plot ({2.2+0.4*cos(\t)},{0.2*sin(\t)});


         \draw [domain=-90:-45,variable=\t,samples=200,->] plot ({3*cos(\t)},{1*sin(\t)});

     \draw  [domain=-45:40,variable=\t,samples=200] plot ({3*cos(\t)},{1*sin(\t)});

      \draw [domain=43:75,variable=\t,samples=200] plot ({3*cos(\t)},{1*sin(\t)});

       \draw [domain=78:90,variable=\t,samples=200] plot ({3*cos(\t)},{1*sin(\t)});


         \draw [domain=120:135,variable=\t,samples=200] plot ({1.63+1*cos(\t)},{3*sin(\t)});

        \draw [domain=148:135,variable=\t,samples=200,->] plot ({1.63+1*cos(\t)},{3*sin(\t)});

        \draw [domain=180:151,variable=\t,samples=200] plot ({1.63+1*cos(\t)},{3*sin(\t)});

       \draw [domain=200:205,variable=\t,samples=200] plot ({1.63+1*cos(\t)},{3*sin(\t)});


            \draw [domain=120:140,variable=\t,samples=200,->] plot ({3.23+1*cos(\t)},{3*sin(\t)});

        \draw [domain=140:148,variable=\t,samples=200] plot ({3.23+1*cos(\t)},{3*sin(\t)});

  \draw [domain=152:180,variable=\t,samples=200] plot ({3.23+1*cos(\t)},{3*sin(\t)});

        \draw [domain=194:205,variable=\t,samples=200] plot ({3.23+1*cos(\t)},{3*sin(\t)});


            \draw [domain=95:440,variable=\t,samples=200,->] plot ({0.8+0.4*cos(\t)},{1.7+0.2*sin(\t)});

       \draw [domain=440:95,variable=\t,samples=200,->] plot ({0.6+0.4*cos(\t)},{0.2*sin(\t)});


\draw [domain=440:95,variable=\t,samples=200,->] plot ({2.45+0.4*cos(\t)},{1.7+0.2*sin(\t)});

\draw [domain=440:95,variable=\t,samples=200,->] plot ({2.2+0.4*cos(\t)},{0.2*sin(\t)});

 \node at (1.38,-0.5) { $A_{K}$};
\end{tikzpicture}
\end{center}

\begin{prop} \label{prop.DeltaSK}
Let $\sum_{K \subset L} c_K [S_K] \in H_2(M, L)$ with $c_K \in \mathbb{Z}$. Then we have
$$ \Delta_L (\sum_{K \subset L} c_K [S_K]) = \sum_{K \subset L} \Big(c_K [\lambda_K] - (\sum_{K' \subset L \setminus K} c_{K'} {\rm lk}(K,K')) [\mu_{K}]\Big).$$
\end{prop}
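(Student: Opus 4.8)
The plan is to exploit the three preceding lemmas, which together already compute $\Delta_L$ on each generator $[S_K]$, and then reduce the proposition to a purely formal rearrangement of a double sum. First I would use that $\Delta_L$ is a homomorphism, so it suffices to determine $\Delta_L([S_K])$ for each component $K \subset L$ and extend by $\mathbb{Z}$-linearity. By \Cref{lem.H2XLVL} the excision isomorphism carries $[S_K]$ to $[A_K] \in H_2(X_L, \partial V_L)$, and by \Cref{lem.derAK} the connecting boundary map sends $[A_K]$ to $[\lambda_K] - \sum_{K' \subset L \setminus K} {\rm lk}(K,K')[\mu_{K'}]$ inside $H_1(\partial V_L) = \prod_{K \subset L} H_1(\partial V_K)$. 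Composing these and summing against the coefficients $c_K$, I obtain
$$
\Delta_L\Big(\sum_{K \subset L} c_K [S_K]\Big) = \sum_{K \subset L} c_K [\lambda_K] - \sum_{K \subset L} c_K \sum_{K' \subset L \setminus K} {\rm lk}(K,K')\,[\mu_{K'}].
$$

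The remaining step is to regroup the meridian contributions by the component whose boundary torus they live on. The first sum is already indexed by $K$ and contributes $c_K[\lambda_K]$ to the $K$-th factor. For the double sum I would fix a target component and collect the coefficient of $[\mu_K]$: in the expression above this coefficient receives a term $c_{K'}\,{\rm lk}(K',K)$ for each $K' \subset L \setminus K$. Using the symmetry ${\rm lk}(K',K) = {\rm lk}(K,K')$ of the linking number and renaming the summation index, this coefficient becomes $\sum_{K' \subset L \setminus K} c_{K'}\,{\rm lk}(K,K')$, which is precisely the meridian coefficient asserted in the statement. Reassembling the longitude and meridian parts factor by factor then yields the claimed formula.

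Since every nontrivial geometric input has been isolated in \Cref{lem.H2ML}, \Cref{lem.H2XLVL}, and \Cref{lem.derAK}, I do not anticipate a genuine obstacle here; the only point demanding care is the index bookkeeping in the double sum, namely ensuring that the exclusion $K' \neq K$ is respected on both sides and that the linking-number symmetry is invoked correctly so that the coefficient of each $[\mu_K]$ emerges with the stated sign and summation range. A secondary point worth recording explicitly is that the entire computation takes place at the fixed finite level $L$, so no passage to the direct/projective limit and no compatibility argument of the type in diagram (1.7) is required for this proposition.
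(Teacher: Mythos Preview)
Your argument is correct and follows essentially the same route as the paper: apply the excision identification $[S_K]\mapsto[A_K]$ together with \cref{lem.derAK} to each generator, then reindex the resulting double sum using the symmetry of the linking number to collect the coefficient of each $[\mu_K]$. The only cosmetic difference is that the paper writes out the reindexing step in a short chain of displayed equalities, while you describe it verbally.
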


\begin{proof} 
Since ${\rm exc}:H_2(M,L)\overset{\cong}{\to} H_2(X_L,\partial X_L)$ maps $[S_K]\mapsto [A_K]$, by Lemmas \ref{lem.H2ML} and \ref{lem.H2XLVL}, it suffices to show that
$$ \partial  (\sum_{K \subset L} c_K [A_K]) = \sum_{K \subset L} (c_K [\lambda_K] - \sum_{K' \subset L \setminus K} c_{K'} {\rm lk}(K,K') [\mu_{K}]).$$ 
By \cref{lem.derAK}, the left-hand side satisfies that 
\begin{align*}
 \partial (\sum_{K \subset L} c_K [A_K])
 =&\sum_{K\subset L}c_{K}\partial([A_{K}])\\
 =&\sum_{K\subset L}c_{K}([\lambda_{K}]+\sum_{K'\subset L\setminus K}-{\rm lk}(K,K')[\mu_{K'}])\\
 =&\sum_{K\subset L}c_{K}[\lambda_K]+\sum_{K\subset L}\sum_{K'\subset L\setminus K}-{\rm lk}(K,K')c_{K}[\mu_{K'}] \ \ \cdots (*).
\end{align*}
By the symmetry of the linking number, the second term of $(*)$ is computed as follows: 
\begin{align*}
\sum_{K\subset L}\sum_{K'\subset L\setminus K}-{\rm lk}(K,K')c_{K}[\mu_{K'}] 
=&\sum_{K,K'\subset L,\ K\neq K'} -{\rm lk}(K,K')c_{K}[\mu_{K'}]\\
=&\sum_{K,K'\subset L,\ K\neq K'} -{\rm lk}(K,K')c_{K'}[\mu_{K}]\\
=&\underset{K\subset L}{\sum}(\underset{K'\subset L\setminus K}{\sum}-{\rm lk}(K,K')c_{K'})[\mu_{K}].
\end{align*} 
Thus, we obtain the desired formula. 
\end{proof}
%
%
%
%

\begin{rem}
By \cref{prop.DeltaSK}, we can explicitly verify the 
commutativity of the diagram (1.7) 
(i.e. $\partial =p_{L',L}\circ \partial \circ j_{L,L'}$) 
as follows: 
$$
\begin{array}{ll}
 &(p_{L',L}\circ \partial \circ j_{L,L'})(\underset{K\subset L}{\sum}c_{K}[S_{K}]) \\
=& (p_{L',L}\circ \partial)(\underset{K\subset L'}{\sum}c_{K}[S_{K}]) \;\;\;\; (\text{by\ }K\subset L'\setminus L\Rightarrow c_{K}=0)\\
=& p_{L',L}(\underset{K\subset L'}{\sum}(c_{K}[\lambda _{K}]-(\underset{K'\subset L\setminus K}{\sum}c_{K'}{\rm lk}(K,K'))[\mu _{K}])) \\
=&\underset{K\subset L}{\sum}(c_{K}[\lambda _{K}]-(\underset{K'\subset L\setminus K}{\sum}c_{K'}{\rm lk}(K,K'))[\mu _{K}]) \\
=&\partial ({\underset{K\subset L}{\sum}c_{K}[S_{K}]}).  \\
\end{array}
$$
\end{rem}
The following assertion explicitly describes the diagonal map $\Delta_{M,\mathcal{L}} : H_{2}(M,\mathcal{L})\rightarrow I_{M,\mathcal{L}}$.

\begin{prop}
Let $[A] \in H_{2}(M,\mathcal{L})$. Then there is a finite sublink $L\subset \mathcal{L}$ such that $[A] \in H_{2}(M,L)$. By \cref{lem.H2ML}, we can write
$[A] =\sum_{K \subset L}c_{K}[S_{K}]$ with $c_K \in \mathbb{Z}$.
Let  $\Delta_{M,\mathcal{L}}([A]) = (a_K)_{K \subset \mathcal{L}}\in I_{M,\mathcal{L}}$. Then we have the following formula:
$$
a_{K}=\begin{cases}
\displaystyle c_{K}[\lambda _{K}]- (\sum_{K'\subset L\setminus K} {\rm lk}(K,K')c_{K'})[\mu _{K}] & (K\subset L) \\
\displaystyle -\sum_{K'\subset L} {\rm lk}(K,K')c_{K'} [\mu _{K}] &(K\subset \mathcal{L}\setminus L)
\end{cases}
$$
\end{prop}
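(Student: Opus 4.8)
The plan is to deduce the general formula from the finite-level computation in \cref{prop.DeltaSK} by unwinding the definition of $\Delta_{M,\mathcal{L}}$ as a limit. Recall that, by construction, for $[A]\in H_2(M,L)$ the $K$-component $a_K\in H_1(\partial V_K)$ of $\Delta_{M,\mathcal{L}}([A])$ is read off as follows: choose any finite sublink $L''\subset\mathcal{L}$ with $L\cup K\subset L''$, form $j_{L,L''}([A])\in H_2(M,L'')$, apply the finite-level map $\Delta_{L''}$, and take the $K$-entry of the resulting element of $H_1(\partial V_{L''})=\prod_{K''\subset L''}H_1(\partial V_{K''})$. The commutativity of the diagram (1.7), verified in Remark 2.5, ensures that $a_K$ does not depend on the choice of $L''$; this independence is the only conceptual point, and once it is in hand each case reduces to a substitution into \cref{prop.DeltaSK}.

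For a component $K\subset L$ I would take $L''=L$. Writing $[A]=\sum_{K''\subset L}c_{K''}[S_{K''}]$ as in \cref{lem.H2ML}, \cref{prop.DeltaSK} gives the $K$-entry of $\Delta_L([A])$ directly as $c_K[\lambda_K]-\bigl(\sum_{K'\subset L\setminus K}{\rm lk}(K,K')c_{K'}\bigr)[\mu_K]$, which is the first line of the claimed formula.

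For a component $K\subset\mathcal{L}\setminus L$ I would take $L''=L\cup K$. Under $j_{L,L''}$ the class $[A]$ is sent to $\sum_{K''\subset L}c_{K''}[S_{K''}]$, i.e.\ to the same element now regarded in $H_2(M,L'')$ with the coefficient of the newly adjoined component $K$ equal to $0$. Applying \cref{prop.DeltaSK} at level $L''$ and extracting the $K$-entry, the $[\lambda_K]$-term disappears since $c_K=0$, while $L''\setminus K=L$ turns the $[\mu_K]$-term into $-\bigl(\sum_{K'\subset L}{\rm lk}(K,K')c_{K'}\bigr)[\mu_K]$, which is the second line of the formula.

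Finally I would confirm that $(a_K)_{K\subset\mathcal{L}}$ is indeed an id\`ele. For every $K\subset\mathcal{L}\setminus L$ the entry $a_K$ is an integer multiple of $[\mu_K]$, and since $[\mu_K]$ generates $\ker v_K$ we have $v_K(a_K)=0$ for all such $K$; as $L$ is finite, the condition $v_K(a_K)=0$ for almost all $K$ holds automatically. I expect no serious obstacle beyond the bookkeeping of the limit: the substance of the argument is already packaged in \cref{prop.DeltaSK} and in the coherence provided by (1.7), so the work is to phrase the component-wise evaluation cleanly and to note that enlarging $L$ by exactly one component suffices for every index $K$.
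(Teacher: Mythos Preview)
Your proposal is correct and follows essentially the same route as the paper's proof, which simply says that the case $K\subset L$ is \cref{prop.DeltaSK} and the case $K\subset\mathcal{L}\setminus L$ is ``obvious.'' Your argument makes the second case explicit by enlarging to $L''=L\cup K$ and applying \cref{prop.DeltaSK} there with $c_K=0$; this is the natural way to unpack what the paper leaves unsaid, and your additional remark that $(a_K)_K$ lies in $I_{M,\mathcal{L}}$ is a harmless consistency check rather than a separate step.
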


\begin{proof}
For the case that $K \subset L$, the formula follows from \cref{prop.DeltaSK}. 
For the case that $K \subset \mathcal{L} \setminus L$, the formula is obvious.
\end{proof}

\section{Hasse norm principle for 3-manifolds} \label{s3}

In this section, we prove the following topological analogue of the Hasse norm principle.

\begin{thm} \label{thm.HNP}
Let $M$ be an integral homology $3$-sphere endowed with a very admissible link $\mathcal{L}$. Let $f : N\rightarrow M$ be a finite cyclic covering branched over a finite sublink $L_{0}$ of $\mathcal{L}$. 
Then 
$$
P_{M,\mathcal{L}}\cap f_{*}(I_{N,f^{-1}(\mathcal{L})})=f_{*}(P_{N,f^{-1}(\mathcal{L})}).
$$
\end{thm}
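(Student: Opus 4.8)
The plan is to prove the two inclusions separately, the inclusion $f_*(P_{N,f^{-1}(\mathcal L)}) \subseteq P_{M,\mathcal L}\cap f_*(I_{N,f^{-1}(\mathcal L)})$ being formal and the reverse inclusion carrying all the content. For the easy inclusion I would first record the naturality square $f_*\circ\Delta_{N,f^{-1}(\mathcal L)} = \Delta_{M,\mathcal L}\circ f_*$ relating the two diagonal maps through the pushforwards $f_*$ on $H_2(N,f^{-1}(\mathcal L))\to H_2(M,\mathcal L)$ and on idèles; this is immediate from the functoriality of the sequence $(1.6)$ under the covering. Granting it, $f_*(P_{N,f^{-1}(\mathcal L)}) = f_*\Delta_{N,f^{-1}(\mathcal L)}(H_2(N,f^{-1}(\mathcal L))) = \Delta_{M,\mathcal L}(f_*H_2(N,f^{-1}(\mathcal L))) \subseteq \Delta_{M,\mathcal L}(H_2(M,\mathcal L)) = P_{M,\mathcal L}$, and the inclusion into $f_*(I_{N,f^{-1}(\mathcal L)})$ is trivial.

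For the reverse inclusion, fix $\alpha\in P_{M,\mathcal L}\cap f_*(I_{N,f^{-1}(\mathcal L)})$ and write $\alpha=\Delta_{M,\mathcal L}(x)$ with $x=\sum_{K\subset L}c_K[S_K]\in H_2(M,L)$ for a finite sublink $L\supseteq L_0$, using \cref{lem.H2ML}. Since $M$ is an integral homology $3$-sphere, $\Delta_{M,\mathcal L}$ is injective: its $\lambda_K$-components already detect the coefficients $c_K$ by \cref{prop.DeltaSK}. Combined with the naturality square, this reduces the entire statement to a single assertion purely about $H_2$: that $x$ lies in the image of $f_*\colon H_2(N,f^{-1}(\mathcal L))\to H_2(M,\mathcal L)$. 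Indeed, $\alpha\in f_*(P_{N,f^{-1}(\mathcal L)})$ if and only if $x\in\operatorname{Im}(f_*)$.

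Next I would make the hypothesis $\alpha\in f_*(I_{N,f^{-1}(\mathcal L)})$ explicit. Membership is a componentwise condition at each knot $K\subset\mathcal L$: the component $a_K$ must lie in the local norm subgroup $f_\#(H_1(\partial V_J))\subseteq H_1(\partial V_K)$ for $J$ a component of $f^{-1}(K)$. Modelling the covering of a tubular neighbourhood on the standard solid-torus covers, I would compute $f_\#$ on meridians and longitudes: for an unramified component ($K\not\subset L_0$) of local degree $e_K$ one has $f_\#(\mu_J)=\mu_K$ and $f_\#(\lambda_J)=e_K\lambda_K$, whereas for a branch component ($K\subset L_0$) of ramification index $e_K$ one has $f_\#(\mu_J)=e_K\mu_K$ and $f_\#(\lambda_J)=\lambda_K$. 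Feeding this together with the explicit description of $\Delta_{M,\mathcal L}$ from Section 2 into the local norm conditions turns $\alpha\in f_*(I_{N,f^{-1}(\mathcal L)})$ into congruences on the $c_K$: divisibility $e_K\mid c_K$ at the unramified knots, and divisibility $e_K\mid\sum_{K'}\operatorname{lk}(K,K')c_{K'}$ at the branch knots.

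Finally, and this is the heart, I would construct a preimage of $x$ under $f_*$ from these congruences. Using $\partial_M\colon H_2(M,\mathcal L)\xrightarrow{\cong}H_1(\mathcal L)$ and the naturality $\partial_M f_* = f_\#\partial_N$, the image of $f_*$ is governed by which orbit-combinations $\sum_J d_J[J]$ of the components $J\subset f^{-1}(\mathcal L)$ are null-homologous in $N$; since $f^{-1}(\mathcal L)$ is very admissible, the classes $[J]$ generate $H_1(N)$, and the solvability reduces to the vanishing of a definite class $\sum_K s_K[J_K]$ (with $s_K=c_K/e_K$ at unramified knots and $s_K=c_K$ at branch knots) in the coinvariants $H_1(N)_{\operatorname{Gal}(N/M)}$. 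I expect this to be the main obstacle. I would attack it via the five-term exact sequence of the covering, which for the cyclic group $G=\operatorname{Gal}(N/M)$ identifies $H_1(f^{-1}(X_L))_G$ with $\ker(\phi\colon H_1(X_L)\to G)$ for $\phi$ the defining homomorphism; the relations among the $[J_K]$ read off from this kernel should be precisely the congruences produced above, so that the class vanishes. The decisive point is the cyclicity of $G$, which is exactly what makes the local congruences sufficient, mirroring the fact visible in \cref{thm.Hasse} that the arithmetic Hasse norm principle is special to cyclic extensions, the obstruction in general being measured by $\hat H^{-1}$ of the idèle class group.
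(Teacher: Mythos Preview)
Your plan diverges from the paper's proof in a substantial way, and while the strategy is coherent, the final step is only sketched and hides real work.

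\textbf{What the paper does.} The paper never tries to lift $x\in H_2(M,\mathcal L)$ directly. Instead it works id\`elically: Proposition~1.9 identifies $I_{N,\mathfrak L}/(P_{N,\mathfrak L}+U^{f^{-1}(L_0)})\cong H_1(Y)$ and likewise for $M$, and Lemma~3.2 (which is exactly your five-term/cyclic input, $\ker f_*=(\tau-1)H_1(Y)$) then gives a short exact sequence of id\`ele-class groups. From this one writes $\mathfrak a=(\tau-1)\mathfrak c+\alpha+\mathfrak u$ with $\alpha\in P_{N,\mathfrak L}$ and $\mathfrak u\in U^{f^{-1}(L_0)}$, so $f_*(\mathfrak a)=f_*(\alpha)+f_*(\mathfrak u)$. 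Now $f_*(\mathfrak u)$ is simultaneously principal and (by Lemma~1.10) purely meridional; the explicit formula for $\Delta_{M,\mathcal L}$ shows that a nonzero principal id\`ele always has a nonzero longitude coordinate, forcing $f_*(\mathfrak u)=0$. No obstruction class in $H_1(N)_G$ is ever computed.

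\textbf{Where your argument is incomplete.} Your reduction to ``$x\in\operatorname{Im}(f_*\colon H_2(N,\mathfrak L)\to H_2(M,\mathcal L))$'' via injectivity of $\Delta_{M,\mathcal L}$ is correct, and the translation into an obstruction in $G$-coinvariants is on the right track. But the last paragraph is a hope, not a proof. Concretely: (i) your local formulas at branch components tacitly assume residual degree $f_K=1$; in general $f_\#(\lambda_J)=f_K\lambda_K+a\,\mu_K$ with $a$ determined by the covering, so you also need $f_K\mid c_K$ and must control the extra $a\,\mu_K$ term; (ii) the obstruction genuinely lives in $H_1(N)_G$, which you then replace by $H_1(Y_L)_G\cong\ker\phi_L$ --- but these differ by the image of the meridians $\mu_J$, and $H_1(N)_G$ is \emph{not} zero in general (e.g.\ for the $n$-fold cover branched over a two-component link with $\phi(\mu_{K_i})=1$ one gets $H_1(N)_G\cong\mathbb Z/n$), so you really must check that the specific class $\sum_K s_K[J_K]$ vanishes there; (iii) doing so amounts to showing that $f_*(\tilde z_0)\in H_1(X_{L})$, expressed via $[\lambda_K]=\sum_{K'\neq K}\operatorname{lk}(K,K')\mu_{K'}$, has $\mu_K$-coefficient divisible by $e_K$ for each $K\subset L_0$. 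This \emph{does} follow from your meridional congruence $e_K\mid\sum_{K'}\operatorname{lk}(K,K')c_{K'}$ together with a correct handling of the $a$-terms, but you have not carried this out --- the sentence ``should be precisely the congruences'' is exactly the gap. The paper's route sidesteps all of this bookkeeping.
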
 

\begin{lem}[cf.~{\cite[Theorem 6.2.1]{Morishita2012}}]

Keeping the same notations as in Theorem 3.1, let $\tau$ be a generator the Galois group ${\rm Gal}(N/M)$, and let $X:=M\setminus L_{0}$ and $Y:=N\setminus f^{-1}(L_0).$ Then, we have the following exact sequence
$$
0\rightarrow (\tau -1)H_{1}(Y)\rightarrow H_{1}(Y)\overset{f_{*}}{\rightarrow}f_{*}(H_{1}(Y))\rightarrow 0.
$$
\end{lem}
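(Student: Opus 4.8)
The entire content of the statement is the identification of $\ker f_*$, since the map $H_1(Y)\to f_*(H_1(Y))$ is surjective by definition and the inclusion $(\tau-1)H_1(Y)\hookrightarrow H_1(Y)$ is trivially injective. Thus the plan is to prove that $\ker\big(f_*\colon H_1(Y)\to H_1(X)\big)=(\tau-1)H_1(Y)$. First I would observe that, after deleting the branch locus, the restriction $f|_Y\colon Y\to X$ is a connected regular (unbranched) covering whose deck transformation group is $G={\rm Gal}(N/M)=\langle\tau\rangle\cong\mathbb{Z}/n$, with $G$ acting freely on $Y$ and $Y/G=X$. The inclusion $(\tau-1)H_1(Y)\subset\ker f_*$ is immediate from $f\circ\tau=f$, which gives $f_*\big((\tau-1)x\big)=f_*(x)-f_*(x)=0$; the substance is the reverse inclusion.

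For the reverse inclusion I would invoke the Cartan--Leray spectral sequence of the regular covering $Y\to X=Y/G$,
$$E^2_{p,q}=H_p\big(G;H_q(Y)\big)\Longrightarrow H_{p+q}(X),$$
and read off its five-term exact sequence in low degrees:
$$H_2(X)\to H_2(G)\to H_1(Y)_G\xrightarrow{\ \overline{f_*}\ }H_1(X)\to H_1(G)\to 0.$$
Here $H_1(Y)_G=H_0\big(G;H_1(Y)\big)=H_1(Y)/(\tau-1)H_1(Y)$ is the group of coinvariants (using that $G$ is cyclic, so its augmentation ideal is generated by $\tau-1$, and that $Y$ is connected, so $H_0(Y)=\mathbb{Z}$ is the trivial $G$-module), and the edge homomorphism $\overline{f_*}$ is precisely the map induced by $f_*$ on coinvariants. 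Consequently $f_*$ factors as the quotient $H_1(Y)\twoheadrightarrow H_1(Y)_G$ followed by $\overline{f_*}$.

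The key computation is then that $H_2(G)=H_2(\mathbb{Z}/n;\mathbb{Z})=0$, a standard fact about the homology of cyclic groups. Substituting this into the five-term sequence forces $\overline{f_*}\colon H_1(Y)_G\to H_1(X)$ to be injective, so the kernel of $f_*$ equals the kernel of the quotient map $H_1(Y)\to H_1(Y)_G$, namely $(\tau-1)H_1(Y)$. Combining the two inclusions yields the claimed exact sequence.

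I expect the main obstacle to be justifying that no input beyond $H_2(\mathbb{Z}/n;\mathbb{Z})=0$ is needed: a purely transfer-theoretic argument does not suffice. Indeed, the transfer yields only $\ker f_*\subset\ker N_G$, where $N_G=1+\tau+\cdots+\tau^{n-1}$, and the gap $\ker N_G/(\tau-1)H_1(Y)=\hat H^{-1}\big(G;H_1(Y)\big)$ is in general nonzero; it is the vanishing of $H_2(G)$ that closes this gap. I would therefore take care to identify the edge map of the spectral sequence with $f_*$ correctly, and to confirm that $Y$ and $X$ are connected (so that $H_0(Y)=\mathbb{Z}$ is the trivial module), which is where connectedness of $N$ and $M$ enters.
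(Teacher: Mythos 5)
Your proof is correct and complete. The paper itself offers no argument for this lemma beyond the citation of Theorem 6.2.1 in Morishita's book, and your derivation --- the five-term exact sequence of the Cartan--Leray spectral sequence for the regular unbranched covering $Y\to X$ (equivalently, the Hochschild--Serre sequence for $1\to\pi_1(Y)\to\pi_1(X)\to{\rm Gal}(N/M)\to 1$) combined with $H_2(\mathbb{Z}/n;\mathbb{Z})=0$ and the identification $H_1(Y)_G=H_1(Y)/(\tau-1)H_1(Y)$ for cyclic $G$ --- is exactly the standard proof of that cited result, with the correct observation that a transfer argument alone would not close the gap measured by $\hat H^{-1}(G;H_1(Y))$.
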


\begin{proof}[Proof of Theorem 3.1.]
Set $\mathfrak{L}:=f^{-1}(\mathcal{L})$. By the following commutative diagram
$$
\begin{array}{ccc}
H_{2}(N,\mathfrak{L}) & \overset{\Delta _{N,\mathfrak{L}}}{\rightarrow} & I_{N,\mathfrak{L}}\\
f_{*}\downarrow &\circlearrowright & \downarrow f_{*} \\
H_{2}(M,\mathcal{L}) & \overset{\Delta_{M,\mathcal{L}}}{\rightarrow} & I_{M,\mathcal{L}},
\end{array}
$$
we have $P_{M,\mathcal{L}}\cap f_{*}(I_{N,\frak{L}})\supset f_{*}(P_{N, \frak{L}})$. Hence, it suffices to show $P_{M,\mathcal{L}}\cap f_{*}(I_{N,\frak{L}})\subset f_{*}(P_{N,\frak{L}})$.

Take any element $f_{*}(\mathfrak{a})\in f_{*}(I_{N,\mathfrak{L}})\cap P_{M,\mathcal{L}} (\mathfrak{a}\in I_{N,\mathfrak{L}})$. By using Proposition 1.9 and Lemma 3.2, we obtain the following exact sequence
$$
0 \rightarrow \frac{(\tau -1)I_{N,\mathfrak{L}}}{P_{N,\mathfrak{L}}+U^{f^{-1}(L_{0})}} \rightarrow \frac{I_{N,\mathfrak{L}}}{P_{N,\mathfrak{L}}+U^{f^{-1}(L_{0})}}
\overset{f_{*}}{\rightarrow} \frac{f_{*}(I_{N,\mathfrak{L}})+P_{M,\mathcal{L}}+U^{L_{0}}}{P_{M,\mathcal{L}}+U^{L_{0}}} \rightarrow 0.
$$
Since $f_{*}(\mathfrak{a})\in P_{M,\mathcal{L}}+U^{L_{0}},\mbox{ we have }\mathfrak{a}+P_{N,\mathfrak{L}}+U^{f^{-1}(L_{0})}\in {\rm Ker}(f_{*})$. 
By the above exact sequence, there exists $\mathfrak{c}\in I_{N,\mathfrak{L}}$ such that $\mathfrak{a} \equiv (\tau -1)\mathfrak{c} \; \mbox{mod} \; P_{N,\mathfrak{L}}+U^{f^{-1}(L_{0})}$. 
Thus, there exists $\alpha \in P_{N,\mathfrak{L}}\text{ and }\mathfrak{u}\in U^{f^{-1}(L_{0})}$ such that $\mathfrak{a}-(\tau -1)\mathfrak{c}=\alpha +\mathfrak{u}$. 
By $f\circ \tau =f$, we have $f_{*}(\mathfrak{a})=f_{*}(\alpha)+f_{*}(\mathfrak{u})$. 
Since $\alpha \in P_{N,\mathfrak{L}}$ and $f_{*}(\mathfrak{a})\in P_{M,\mathcal{L}}$, by Definition 1.8, there exist $[A]\in H_{2}(N,\mathfrak{L})$ and $[B]\in H_{2}(M,\mathcal{L})$ such that $\alpha =\Delta _{N,\mathfrak{L}}([A])\mbox{ and }f_{*}(\mathfrak{a})=\Delta _{M,\mathcal{L}}([B])$. Therefore, we have
$$
\begin{array}{cl}

f_{*}(\mathfrak{u})&=\ \Delta _{M,\mathcal{L}}([B])-(f_{*}\circ \Delta _{N,\mathfrak{L}})([A])\\

 &=\ \Delta _{M,\mathcal{L}}([B])-(\Delta _{M,\mathcal{L}}\circ f_{*})([A])\\

 &=\ \Delta _{M,\mathcal{L}}([B]-f_{*}([A])).

\end{array}
$$

Suppose $[B]-f_{*}([A])\ne 0$. Then there exists $L_{1}\subset \mathcal{L}$ such that $[B]-f_{*}([A])= \underset{K\subset L_{1}}{\sum}c_{K}[S_{K}]$ with $c_{K}\ne 0$ for some $K \subset L_1$. By using Proposition 2.6, we obtain 
$$
f_*(\frak{u}) = \Delta _{M,\mathcal{L}}([B]-f_{*}([A]))=\begin{cases}
\displaystyle 
c_{K}[\lambda _{K}]-(\underset{K'\subset L_{1}\setminus K}{\sum}{\rm lk}(K,K')c_{K'})[\mu _{K}] & (K\subset L_{1}) \\
\displaystyle 
-\underset{K'\subset L_{1}}{\sum}{\rm lk}(K,K')c_{K'}[\mu _{K}] &(K\subset \mathcal{L}\setminus L_{1}).\end{cases}
$$
Thus, there exists $ K\subset L_{1}\subset \mathcal{L}$ such that $c_K [\lambda_K] \neq 0$.
On the other hand, by Lemma 1.10 for $\mathfrak{u}\in U^{f^{-1}(L_0)}=\underset{J \subset \mathfrak{L}\setminus f^{-1}(L_0)}{\prod}\mathbb{Z}[\mu _J], \mbox{ we obtain }
f_{*}(\mathfrak{u})\in \underset{K\subset \mathcal{L}}{\prod}\mathbb{Z}[\mu _{K}]$. This is a contradiction.

Thus, we have $[B]-f_{*}([A])= 0$, and hence $f_*(\frak{a}) = f_*(\alpha) \in f_*(P_{N,\frak{L}})$ as desired. 
\end{proof}

\begin{rem} 
An integral homology 3-sphere ($\mathbb{Z}$HS$^3$) is an analogue of a number field with class number 1. A rational homology 3-sphere ($\mathbb{Q}$HS$^3$) is an analogue of a number field with any class number. We may extend our result to the cases over a general $\mathbb{Q}$HS$^3$ by introducing generalized notions of the linking numbers and Seifert surfaces. This will be discussed in our future study. 
\end{rem} 

\noindent 
\textbf{Acknowledgement}.  I would like to thank my supervisor Professor Masanori Morishita for proposing the problem to find a topological analogue for 3-manifolds of the Hasse norm principle, and for his advice and encouragement. Also, I would like to thank Jun Ueki for answering my question and for his advice.

\bibliographystyle{plainurl}
\bibliography
{Tashiro_HNP_20240410.arXiv.bbl}

\


\leavevmode\\
Hirotaka Tashiro\\
Faculty of Mathematics, Kyushu University\\
744, Motooka, Nishi-ku, Fukuoka, 819-0395, JAPAN\\
e-mail: tashiro.hirotaka.035@s.kyushu-u.ac.jp

\end{document}